\documentclass[a4paper, 11pt]{amsart}
\usepackage{amsfonts, amsmath, amssymb, amsthm,mathtools, url,dsfont}
\usepackage{graphicx}
\usepackage[english] {babel}
\usepackage{enumitem}
\usepackage{array}
\usepackage{csquotes}
\MakeOuterQuote{"}

\newtheorem{thm}{Theorem}
\newtheorem{cor}[thm]{Corollary}
\newtheorem{conj}[thm]{Conjecture}
\newtheorem{lemma}[thm]{Lemma}

\newtheorem*{remark}{Remark}

\newcommand{\calE}{\mathcal E^*}
\newcommand{\calA}{\mathcal A}
\newcommand{\ve}{\varepsilon}
\newcommand {\bO}{\mathrm{O}}
\newcommand {\lo}{\mathrm{o}}
\newcommand {\e}{\mathrm{e}}
\newcommand {\dd}{\mathrm{d}}

\begin{document}
\markboth{C. Aistleitner and L. Kaziulyt\.{e}}
{xxxxyyy}

\title[Variants of Khintchine's theorem]{Variants of Khintchine's theorem in metric Diophantine approximation}	

\author{Laima Kaziulyt\.{e}   }
	
	\maketitle
		
\begin{abstract}
New results towards the Duffin-Schaeffer conjecture, which is a fundamental unsolved problem in metric number theory, have been established recently assuming extra divergence. Given a non-negative function $\psi: \mathbb{N}\to\mathbb{R}$ we denote by $W(\psi)$ the set of all $x\in\mathbb{R}$ such that $|nx-a|<\psi(n)$ for infinitely many $a,n$. Analogously, denote $W'(\psi)$ if we additionally require $a,n$ to be coprime. Aistleitner et al. \cite{chris} proved that $W'(\psi)$ is of full Lebesgue measure if there exist an $\ve>0$ such that $\sum_{n=2}^\infty\psi(n)\varphi(n)/(n(\log n)^\ve)=\infty$. This result seems to be the best one can expect from the method used. Assuming the extra divergence $\sum_{n=2}^\infty\psi(n)/(\log n)^\ve=\infty$ we prove that $W(\psi)$ is of full measure. This could also be deduced from the result in \cite{chris}, but we believe that our proof is of independent interest, since its method is totally different from the one in \cite{chris}. As a further application of our method, we prove that a variant of Khintchine's theorem is true without monotonicity, subject to an additional condition on the set of divisors of the support of $\psi$.

\end{abstract}

Keywords: \keywords{ Khintchine's theorem, Duffin-Schaeffer conjecture, extra divergence, Diophantine approximation, metric number theory}

Mathematics Subject Classification 2010: 11M41, 11N80

\section{Introduction}

To be concise, throughout the paper we use the following notation: $p$ denotes a prime number, $\gamma$ is the Euler-Mascheroni constant, $c$ denotes a positive constant which might be different in different relations, $(m,n)$ stands for the greatest common divisor of two integers $m,n$. We write $\lambda$ for Lebesgue measure and $\varphi, \mu, \zeta$ for the Euler's totient, M\"obius and Riemann zeta functions, respectively. We shall also use the standard Landau's symbols $\bO$ and $ \lo$ to compare the order of magnitude of functions in the neighbourhood of infinity as well as Vinogradov's notation $\ll$. 

Let $\psi: \mathbb{N}\to[0,1/2]$ be any function. For each positive integer $n$ define $\mathcal{E}_n\subseteq\mathbb{R}/\mathbb{Z}$ by
\begin{align*}
\mathcal{E}_n := \bigcup_{a=1}^n \left( \frac{a - \psi(n)}{n},\frac{a+\psi(n)}{n} \right).
\end{align*}
Further, define $\mathcal{E}'_n\subseteq\mathbb{R}/\mathbb{Z}$ similarly, but with union restricted to those $a$, coprime to $n$. Write $W(\psi), W'(\psi)$ for the set of $x\in[0,1)$ which are contained in infinitely many sets 
$\mathcal{E}_n, \mathcal{E}'_n $ respectively.

In 1924 Khintchine \cite{khitc} proved that when $n\psi(n)$ is non-increasing, $\lambda(W(\psi))=1$ if
\begin{align}
\sum_{n=1}^\infty\psi(n) \label{sum_of_psi}
\end{align}
diverges. If we do not assume monotonicity, one can construct a function $\psi(n)$ for which (\ref{sum_of_psi}) diverges, yet $\lambda(W(\psi))=0$, as was shown by Duffin and Schaeffer \cite{daffin-schaeffer} in 1941.
In the same paper they raised the following assertion, which became the most important problem in metric number theory, still open to this day. It is an attempt to remove monotonicity from Khinchine's theorem by reducing the sets $\mathcal{E}_n$ to $\mathcal{E}'_n$.


\begin{conj}
		\text{(Duffin-Schaeffer, 1941)}. 
	We have $\lambda(W'(\psi))=1$ if and only if 
	\begin{align}
	\sum_{n=1}^\infty\psi(n)\frac{\varphi(n)}{n}=\infty. \label{D-S divergence }
	\end{align}
\end{conj}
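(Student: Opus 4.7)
The plan is to handle the two implications separately. The convergence half — that $\lambda(W'(\psi))=0$ whenever $\sum_n \psi(n)\varphi(n)/n<\infty$ — is a direct application of the first Borel--Cantelli lemma, since $\lambda(\mathcal{E}'_n)=2\psi(n)\varphi(n)/n$ and summability of these measures forces the $\limsup$ set to be null. So the entire content lies in the divergence direction: assuming $\sum_{n=1}^\infty \psi(n)\varphi(n)/n=\infty$, I must show $\lambda(W'(\psi))=1$.

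By Gallagher's zero-one law, $\lambda(W'(\psi))\in\{0,1\}$, so it is enough to produce \emph{positive} measure. The natural route is the Chung--Erd\H{o}s / Paley--Zygmund second moment inequality applied to the indicator functions $\mathbf{1}_{\mathcal{E}'_n}$, which reduces the problem to bounding the correlations. Concretely, I would try to establish, along a suitable subsequence of $N$,
\[
\sum_{m,n\le N} \lambda(\mathcal{E}'_m\cap\mathcal{E}'_n) \;\le\; C\bigg(\sum_{n\le N}\lambda(\mathcal{E}'_n)\bigg)^{\!2},
\]
with an absolute constant $C$. If this ``quasi-independence on average'' holds for arbitrarily large $N$, the Paley--Zygmund bound gives $\lambda(W'(\psi))\ge 1/C>0$, and Gallagher upgrades this to full measure.

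The real difficulty is controlling the overlap $\lambda(\mathcal{E}'_m\cap\mathcal{E}'_n)$. The ``expected'' contribution is roughly $4\psi(m)\psi(n)\varphi(m)\varphi(n)/(mn)$, but genuine pathologies occur when $m$ and $n$ share large common prime-power divisors; a fraction $a/m$ with $(a,m)=1$ coincides with many $b/n$ with $(b,n)=1$ precisely when the denominators have substantial shared structure. Quantitatively, after expanding in divisors of $(m,n)$, the anomalous part of the overlap is governed by sums of $\psi$ over sparse arithmetic subsets, and it is exactly these anomalies that defeat the naive second moment method and force the counterexamples of Duffin--Schaeffer in the absence of the coprimality restriction.

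The main obstacle — and the one on which any honest proof stands or falls — is showing that the set of pairs $(m,n)$ producing anomalous overlaps contributes only a controlled fraction of the total second moment. My plan is to encode the bad pairs as edges of a weighted ``GCD graph'' on $\{n\le N: \psi(n)\neq 0\}$, with the weight of an edge measuring how far the overlap $\lambda(\mathcal{E}'_m\cap\mathcal{E}'_n)$ exceeds the independent prediction, and then to iteratively prune vertices and edges whose local neighbourhoods force correlation. At each step one restricts to a ``quasi-regular'' subgraph in which the degrees, the sizes of the shared divisors, and the local densities of $\psi$ are all controlled within a logarithmic factor; on such a subgraph one can rule out anomalous overlaps by a pigeonhole argument on the divisors of $mn/(m,n)^2$. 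The delicate point is to carry out this pruning while retaining enough vertices so that the mass $\sum \psi(n)\varphi(n)/n$ on the surviving set still diverges — this is exactly the step where the result of \cite{chris} loses the factor $(\log n)^{\ve}$, and where a new combinatorial input (rather than a sharper analytic estimate) is needed to remove that last logarithm.
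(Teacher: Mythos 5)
The statement you are trying to prove is presented in the paper as an open conjecture, not a theorem: the paper only records the easy direction (that convergence of $\sum_n \psi(n)\varphi(n)/n$ forces $\lambda(W'(\psi))=0$ by the first Borel--Cantelli lemma) and then proves much weaker results under \emph{extra divergence} hypotheses. Your treatment of the convergence half is correct and matches the paper's remark. Your reduction of the divergence half to a second-moment (quasi-independence on average) bound along a subsequence, followed by Gallagher's zero-one law, is also the standard and correct framework --- it is exactly the skeleton used in the paper's Theorem \ref{khintchine-divergence} and in the extra-divergence literature it cites.

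However, the proposal does not contain a proof. The entire content of the conjecture is concentrated in the step you describe as pruning a weighted GCD graph to a quasi-regular subgraph on which the overlap sum $\sum_{m,n\le N}\lambda(\mathcal{E}'_m\cap\mathcal{E}'_n)$ is bounded by $C\bigl(\sum_{n\le N}\lambda(\mathcal{E}'_n)\bigr)^2$ while the surviving mass $\sum\psi(n)\varphi(n)/n$ still diverges. You state explicitly that "a new combinatorial input \dots is needed" to carry this out; that missing input is not a technical detail but is precisely the open problem. Without it, the Pollington--Vaughan bound (\ref{pol-vaugh}) only controls the overlaps up to the unbounded factor $P(m,n)$, and all known methods (including those of \cite{div1}, \cite{div2}, \cite{chris}) can absorb that factor only at the cost of an extra $(\log n)^{\ve}$ in the divergence hypothesis. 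So the proposal should be regarded as a correct identification of the strategy and of where the difficulty lies, not as a proof of the statement.
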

 Notice that the necessity follows immediately from the first Borel-Cantelli lemma, since $\lambda\left(\mathcal{E}_n\right)=2\psi(n)\varphi(n)/n$.
Various attempts have been made to prove the conjecture posing arithmetic conditions on the function $\psi$ (see for instance \cite{har_p, poll-vaugh, vaaler}). A good exposition of partial results obtained up to the end of the 20th century can be found in the classical G. Harman book \cite{har_b}.  

Recently, however, a new approach has been taken. Haynes, Pollington and Velani in their 2011 paper \cite{div1} proved a weakened conjecture assuming extra divergence. To be precise, they showed that $\lambda(W'(\psi))=1$ supposing
\begin{align*}
\sum_{n=3}^\infty\frac{\psi(n)\varphi(n)}{n\exp(c\log n/ \log\log n)}=\infty. 
\end{align*}
Pursuing their ideas, improvements followed. In \cite{div2} Beresnevich, Harman, Haynes and Velani reduced the extra divergence condition to
\begin{align*}
\sum_{n=16}^\infty\frac{\psi(n)\varphi(n)}{n(\log n)^{\ve\log\log\log n}}=\infty,
\end{align*}
where $\ve>0$ is arbitrary. This has been recently reduced even further to
\begin{align}
\label{weakest_divergence}
\sum_{n=2}^\infty\frac{\psi(n)\varphi(n)}{n(\log n)^{\ve}}=\infty 
\end{align}
 by Aistleitner et al. \cite{chris}, where again $\ve>0$ is arbitrary. The main ideas behind these proofs are as follows. Using some kind of extra divergence, the relation 
\begin{align}
\label{overlap_est}
\sum_{1\le m,n\le N}\lambda\left(\tilde{\mathcal{E}_m}\cap\tilde{\mathcal{E}_n}\right)\ll\left(\sum_{1\le n\le N}\lambda\left(\tilde{\mathcal{E}_n}\right)\right)^2
\end{align}
is proved, where $\tilde{\mathcal{E}_n}$ are specific auxiliary sets. Then the application of Lemma~\ref{limsup} below, together with the Gallagher zero-one law \cite{gall}, leads to the results outlined. To get the overlap estimate (\ref{overlap_est}) Pollington and Vaughan's result \cite{poll-vaugh}, that
 \begin{align}
 \label{pol-vaugh}
 \lambda\left(\mathcal{E}_n'\cap\mathcal{E}_m'\right)\ll\lambda\left(\mathcal{E}_n'\right)\lambda\left(\mathcal{E}_m'\right)P(m,n)
 \end{align}
 is used. Here $P(m,n)$ is a factor which depends on the function $\psi$ and on the divisors of $m$ and $n$ in a complicated way. It is known that it 
   can be unbounded for some parameters $m,n$ and thus can not be ignored. The result of \cite{div2} was achieved by averaging $P(m,n)$ over the downscaled versions of the sets $\mathcal{E}_m'$ and $\mathcal{E}_n'$, whereas in \cite{chris} this averaging was taken inside the proof of the formula (\ref{pol-vaugh}). The method developed in the three papers \cite{div1, div2, chris} seems to have achieved its best precision in \cite{chris} and whether an even weaker divergence condition than (\ref{weakest_divergence}) is sufficient is not known. Notice that results on the Duffin-Schaeffer conjecture also hold for Khintchine's problem, since $W'(\psi)\subset W(\psi)$.
  
  In our proof we do not make use of Pollington-Vaughan's sieve estimate for the factor $P(m,n)$ in (\ref{pol-vaugh}). In contrast to the method in \cite{chris} and \cite{div2}, our method does not use any averaging argument. The key novel idea of us is to define new partially reduced sets $\mathcal{E}_n^D$ (see next section) to satisfy (\ref{overlap_est}), which are bigger than $\mathcal{E}_n'$, but smaller than $\mathcal{E}_n$. The benefit of moving from $(\mathcal{E}_n)_{n \geq 1}$ to the reduced set system $(\mathcal{E}_n^D)_{n \geq 1}$ is twofold. Firstly, the measure of these two types of sets is the same up to a multiplicative constant. This lets to avoid much of the trouble that appears in the Duffin-Schaeffer conjecture where too much measure is lost. Secondly, the reduced set system $(\mathcal{E}_n^D)_{n \geq 1}$ shows better independence properties than the full set system $(\mathcal{E}_n)_{n \geq 1}$ in Khintchine's setting. By construction, the intersection of two sets $\mathcal{E}_n^D$ and $\mathcal{E}_m^D$ can only be too large if $(m,n)$ is very large. In other words, for given $n$, there is only a small number of indices $m$ for which the overlap of $\mathcal{E}_n^D$ and $\mathcal{E}_m^D$ is too large. The extra divergence assumption allows us to show that the impact of these "problematic" pairs of indices $m$ and $n$ is negligible. We believe that the same reduction method can be successfully applied in other variants of Khintchine's theorem, and we plan to come back to this topic in a future paper.

\section{Statement of Results}

 For $n \geq 1$, let $D = D(n)$ be a positive real number. For every $n \geq 1$ we define a set $\mathcal{E}_n^D \subseteq \mathbb{R} / \mathbb{Z}$ by 
\begin{equation} \label{edef}
\mathcal{E}_n^D := {\bigcup}_{a\in S} \left( \frac{a - \psi(n)}{n},\frac{a+\psi(n)}{n} \right),
\end{equation}
where 
\begin{align}
S=\left\{a\in\{1,\dots, n\} | (a,n)\le D\right\}. 
\label{set S}
\end{align}
 Note that independently of the choice of $D$ we always have $0 \leq \lambda(\mathcal{E}_n^D) \leq 2 \psi(n)$.

\begin{lemma}
\label{S>=neps}
Fix $\varepsilon\in(0,1)$. Let
$$
D=(\log n)^\ve
$$
for all $n>1$. Then we have
$$
|S|\ge n\ve/10
$$
for all sufficiently large $n$, where $|S|$ denotes the cardinality of the set $S$, defined by~(\ref{set S}).
\end{lemma}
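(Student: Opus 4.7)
The plan is to decompose $n = n_1 n_2$, where $n_1 := \prod_{p \mid n,\, p \leq D} p^{v_p(n)}$ is the $D$-smooth part of $n$ and $n_2 := n/n_1$ the $D$-rough part, so that $\gcd(n_1, n_2) = 1$. Since $(a, n) = (a, n_1)(a, n_2)$ and the two factors are independent modulo $n_1$ and $n_2$ (by the Chinese Remainder Theorem), the event $\{(a, n_1) \leq D,\, (a, n_2) = 1\}$ already implies $a \in S$; counting such $a$ yields
\begin{align*}
|S| \;\geq\; \tilde S(n_1) \cdot \varphi(n_2), \quad \text{where} \quad \tilde S(n_1) := \#\{b \in \{1, \dots, n_1\} : (b, n_1) \leq D\}.
\end{align*}

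Every prime factor of $n_2$ exceeds $D$. Since each factor in $\varphi(n_2)/n_2 = \prod_{p \mid n_2}(1 - 1/p)$ is less than $1$, minimizing this product over all feasible $n_2 \leq n$ leads to the extremal choice $n_2 = \prod_{D < p \leq X} p$, where $X$ is the largest prime such that $n_2 \leq n$. A Chebyshev-type bound then forces $X \leq (\log n)(1 + \lo(1))$, so Mertens' third theorem gives
\begin{align*}
\frac{\varphi(n_2)}{n_2} \;\geq\; \prod_{D < p \leq X}\!\left(1 - \frac{1}{p}\right) \;\sim\; \frac{\log D}{\log X} \;\geq\; \ve \cdot (1 + \lo(1)),
\end{align*}
and hence $\varphi(n_2)/n_2 \geq \ve/2$ for $n$ sufficiently large.

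For the second factor, we claim $\tilde S(n_1)/n_1 \geq c$ for some absolute $c > 0$. The case $n_1 \leq D$ is trivial, since then $\tilde S(n_1) = n_1$, so assume $n_1 > D$. Using the partition $\tilde S(n_1) = \sum_{g \mid n_1,\, g \leq D} \varphi(n_1/g)$ together with the inequality $\varphi(n_1/g) \geq (n_1/g)\varphi(n_1)/n_1$ (which holds since the set of primes dividing $n_1/g$ is contained in the set of primes dividing $n_1$), we obtain $\tilde S(n_1)/n_1 \geq (\varphi(n_1)/n_1) \sum_{g \mid n_1,\, g \leq D} 1/g$. In the extremal primorial case $n_1 \sim \prod_{p \leq D} p$, Mertens' theorem combined with the classical estimate $\sum_{g \leq x,\, g \text{ squarefree}} 1/\varphi(g) \sim \log x$ gives $\tilde S(n_1)/n_1 \sim e^{-\gamma}$, a positive absolute constant. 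The main obstacle is extending this bound uniformly to all admissible $n_1$, for which we expect a further case analysis (for example on $\omega(n_1)$, comparing against the primorial extremal case) to suffice. Combining with the first factor then yields $|S|/n \geq (\ve/2) \cdot c \geq \ve/10$ for $n$ sufficiently large.
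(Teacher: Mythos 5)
Your decomposition $n = n_1 n_2$ into $D$-smooth and $D$-rough parts, followed by $|S| \ge \tilde S(n_1)\,\varphi(n_2)$ via CRT, is a genuinely different organization from the paper, which works directly with $|S| = \sum_{d\mid n,\,d\le D}\varphi(n/d) \ge \varphi(n)\sum_{d\mid n,\,d\le D} 1/d$ and evaluates the resulting products by splitting $p\mid n$ into ranges $p\le D$, $D<p\le \log n$, and $p>\log n$. Your handling of $\varphi(n_2)/n_2$ is essentially correct and parallels the paper's treatment of $\prod_{p\mid n,\,p>D}(1-1/p)$; the extremal-primorial reduction can be made rigorous by observing that the product of the $k$ smallest primes in $(D,\infty)$ minimizes $\prod(1-1/p)$ among all size-$k$ sets of primes $>D$.

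However, there is a genuine gap in your treatment of $\tilde S(n_1)/n_1$. After the correct step $\tilde S(n_1)/n_1 \ge \frac{\varphi(n_1)}{n_1}\sum_{g\mid n_1,\,g\le D}\frac{1}{g}$ you only verify the primorial case and write that "a further case analysis ... [is expected] to suffice" for uniformity. This is precisely the crux of the lemma, and it is not obvious: adding prime factors to $n_1$ shrinks $\varphi(n_1)/n_1$ while it enlarges the divisor sum, and showing these effects balance to a positive constant \emph{uniformly in $n_1$} is the hard part. The paper resolves exactly this tension with a sieve-with-logarithmic-weights estimate (following Lemma~2.1 of Granville--Koukoulopoulos--Matom\"aki), comparing $\sum_{d\mid n,\,d\le D}1/d$ to the unrestricted sum $\sum_{m\le D}1/m$ by inserting the Euler factors for the primes $p\le D$ not dividing $n$. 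Without such an argument (or an equivalent one), the proposal does not constitute a proof; the "further case analysis" you gesture at would, if carried out, essentially reproduce this sieve lemma, so the ingredient cannot be dispensed with.
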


\begin{remark}
	The constant $1/10$ in Lemma \ref{S>=neps} is by no means sharp. We just require $\left|S\right|\gg n\ve$.
	
	The number $D$ in Lemma \ref{S>=neps} is optimal, except for the particular value of $\ve$. More precisely, if we require the set $S$ to contain at least $n\ve$ elements, then $D$ has to be at least $c\left(\log n\right)^{\ve\e^\gamma}$ for some constant $c>0$. This can be seen by letting 
	$$
	n=\prod_{i=1}^kp_i,
	$$
	where $p_i$ are all the consecutive primes up to some sufficiently large $p_k$. Then
	$$
	|S|=\sum_{d|n,\  d\le D}\varphi\left(\frac{n}{d}\right)=\varphi(n)\sum_{l\le D}\frac{\mu(l)^2}{\varphi(l)}.
	$$ 
	 The latter sum is $\log D+\bO(1)$ (see \cite{ward}). Using this and the estimate $\varphi(n)=n\e^{-\gamma}/\log\log n+\bO(n/(\log\log n)^2)$ (see \cite[Theorem 2.9]{mont-vaugh}), from the equality above and our requirement we get $D\ge c(\log n)^{\ve\e^\gamma}$.
\end{remark}	
\begin{cor}
	\label{corollary}
Fix $\varepsilon\in(0,1)$. Let
$$
D=(\log n)^\ve
$$
for all $n>1$. Then we have
$$
\lambda(\mathcal{E}_n^D) \geq \frac{1}{5}\varepsilon \psi(n),
$$
for all sufficiently large $n$.	
\end{cor}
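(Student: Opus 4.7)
The proof is essentially immediate from Lemma~\ref{S>=neps} together with the observation that the intervals making up $\mathcal{E}_n^D$ are pairwise disjoint (modulo $1$).

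First I would note that each interval $((a-\psi(n))/n, (a+\psi(n))/n)$ appearing in the union~(\ref{edef}) has length $2\psi(n)/n$. For two distinct $a,a' \in \{1,\dots,n\}$ the corresponding centers $a/n$ and $a'/n$ lie at distance at least $1/n$ apart on $\mathbb{R}/\mathbb{Z}$, while the sum of the radii of the two intervals equals $2\psi(n)/n \leq 1/n$, using the standing hypothesis $\psi(n) \leq 1/2$. Hence the intervals are pairwise disjoint (they may touch at endpoints, but this contributes nothing to Lebesgue measure), and
\begin{equation*}
\lambda(\mathcal{E}_n^D) = |S|\cdot \frac{2\psi(n)}{n}.
\end{equation*}

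Next, applying Lemma~\ref{S>=neps} with our choice $D = (\log n)^\varepsilon$, for all sufficiently large $n$ we obtain $|S| \geq n\varepsilon/10$. Substituting this into the identity above gives
\begin{equation*}
\lambda(\mathcal{E}_n^D) \geq \frac{n\varepsilon}{10} \cdot \frac{2\psi(n)}{n} = \frac{1}{5}\varepsilon\, \psi(n),
\end{equation*}
as required. Since the argument is purely a size-versus-spacing estimate for equally spaced intervals, there is no genuine obstacle here; the only substantive input is the cardinality bound from Lemma~\ref{S>=neps}, and the disjointness check is a one-line consequence of $\psi(n) \leq 1/2$.
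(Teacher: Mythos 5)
Your proof is correct and follows the same route as the paper: the paper states $\lambda(\mathcal{E}_n^D) = 2\psi(n)/n\cdot|S|$ as clear and then applies Lemma~\ref{S>=neps}. You simply fill in the (one-line) disjointness justification that the paper leaves implicit.
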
	
\begin{proof}
Clearly, $\lambda(\mathcal{E}_n^D) = 2 \psi(n)/n\cdot|S|$ (remember that the sets $\mathcal{E}_n^D$ are defined by (\ref{edef})). The result now follows by inserting the lower bound of $|S|$ from Lemma~\ref{S>=neps}.	
\end{proof}
Corollary says that when we choose $D=(\log n)^\ve$, the measure of the reduced sets $\mathcal{E}_n^D$ is the same as the measure of the original sets $\mathcal{E}_n$, up to a multiplicative constant. This will play a key role in the proof of Theorem \ref{khintchine-divergence}. Note that this is different from the case of the Duffin-Schaeffer conjecture, where for the measure of the reduced sets $\mathcal{E}'_n$ we only have the lower bound $\lambda(\mathcal{E}_n')\gg\lambda(\mathcal{E}_n)/\log\log n$. 

\begin{thm}
	\label{khintchine-divergence}
	We have $\lambda\left(W(\psi)\right)=1$ for any function $\psi: \mathbb{N}\to[0,1/2]$, for which there is a constant $\ve>0$, such that 
	\begin{align}
	\label{divergence}
	\sum_{n=2}^\infty\frac{\psi(n)}{(\log n)^\ve}=\infty.
	\end{align}
\end{thm}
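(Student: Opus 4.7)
The plan is to apply the quasi-independence Borel--Cantelli lemma (Lemma~\ref{limsup}) to the reduced system $(\mathcal{E}_n^D)_{n \geq 2}$ with $D = D(n) := (\log n)^{\varepsilon}$, and then to conclude via Gallagher's zero-one law. Since $\mathcal{E}_n^D \subseteq \mathcal{E}_n$, one has $\limsup_n \mathcal{E}_n^D \subseteq W(\psi)$, so it suffices to prove $\lambda(\limsup_n \mathcal{E}_n^D) > 0$. Corollary~\ref{corollary} gives $\lambda(\mathcal{E}_n^D) \geq (\varepsilon/5)\psi(n)$ for all sufficiently large $n$, and the hypothesis (\ref{divergence}) trivially implies $\sum_n \psi(n) = \infty$; hence $\sum_n \lambda(\mathcal{E}_n^D) = \infty$. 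What remains is to establish the overlap estimate
\begin{equation*}
\sum_{2 \leq m,n \leq N} \lambda(\mathcal{E}_m^D \cap \mathcal{E}_n^D) \ll \Bigl( \sum_{2 \leq n \leq N} \lambda(\mathcal{E}_n^D) \Bigr)^{2}
\end{equation*}
with an absolute implied constant.

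To estimate a single overlap I would fix $m \leq n$, set $d = (m,n)$ and write $m = dm'$, $n = dn'$ with $(m',n') = 1$. Two intervals centred at $a/n \in \mathcal{E}_n^D$ and $b/m \in \mathcal{E}_m^D$ meet iff $|am' - bn'| \leq m'\psi(n) + n'\psi(m)$, and for each admissible value of this linear form there are exactly $d$ pairs $(a,b) \in \{1,\dots,n\} \times \{1,\dots,m\}$. A direct count of intersection lengths then yields a "generic" bound of the form $\lambda(\mathcal{E}_m^D \cap \mathcal{E}_n^D) \ll \psi(m)\psi(n) + T_{m,n}$, where $T_{m,n}$ collects the contribution of the coincident-centre case $am' = bn'$, i.e.\ $a/n = b/m$. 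The key observation is that $am' = bn'$ together with $(m',n') = 1$ forces $n' \mid a$, whence $(a,n) \geq n' = n/d$; the definition of $S$ in~(\ref{set S}) then implies that $T_{m,n}$ vanishes unless $d \geq n/D$. Consequently, for the "good" pairs with $d < n/D$ one has $\lambda(\mathcal{E}_m^D \cap \mathcal{E}_n^D) \ll \lambda(\mathcal{E}_m^D)\lambda(\mathcal{E}_n^D)$, and summing these contributes $O(M_N^2)$ with $M_N := \sum_{n \leq N}\lambda(\mathcal{E}_n^D)$.

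The main obstacle --- and the place where the extra divergence (\ref{divergence}) genuinely enters --- is the control of the exceptional pairs with $(m,n) \geq n/D$. For a fixed $n$ these are the pairs for which $n/(m,n)$ is a divisor of $n$ in $[1, D]$, while the admissible $m \leq N$ must be multiples of $(m,n)$; by standard multiplicative-function and divisor-sum estimates the exceptional contribution from a fixed $n$ can be bounded by a factor of order $(\log n)^{\varepsilon}$ times the trivial single-set measure $\psi(n)$, which, when summed over $n$, introduces precisely the factor $(\log n)^{\varepsilon}$ that is balanced by replacing $\sum \psi(n)$ by $\sum \psi(n)/(\log n)^{\varepsilon}$ in the hypothesis. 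Keeping this bookkeeping uniform in $N$ so that the exceptional contribution is absorbed into $O(M_N^2)$ is the heart of the argument; once established, Lemma~\ref{limsup} gives $\lambda(\limsup_n \mathcal{E}_n^D) > 0$, and Gallagher's zero-one law upgrades this to $\lambda(W(\psi)) = 1$.
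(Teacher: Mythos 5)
Your high-level plan follows the paper's strategy: pass to the reduced sets $\mathcal{E}_n^D$, split overlaps into a generic part (controlled by $\psi(m)\psi(n)$) and a coincident-centre part (which can only occur when $(m,n) \ge n/D$), and invoke Lemma~\ref{limsup} together with a zero-one law. Your counting leading to the observation that coincident centres force $(m,n) \ge n/D$ is exactly the key structural point, and your reduction of the generic part to $O\bigl(\bigl(\sum_n \lambda(\mathcal{E}_n^D)\bigr)^2\bigr)$ via Corollary~\ref{corollary} is correct. (Minor: for $W(\psi)$ the relevant zero-one law is Cassels' \cite{cassels}, not Gallagher's, which concerns the reduced-fraction sets $W'(\psi)$.)

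However, the part you flag as "the heart of the argument" --- controlling the exceptional pairs --- contains a genuine gap, and as written your parameter choice cannot close it. For a fixed $n$ the exceptional $m<n$ are multiples of $n/d$ with $d \mid n$, $d \le D$, and the coincident-centre contribution is bounded by $\frac{\psi(n)}{n}\sum_{d\mid n,\, d\le D} n\,d \ll \psi(n)\,D^2$, not $\psi(n)\,D$ as your description implies. With your choice $D=(\log n)^{\varepsilon}$ this gives an exceptional sum $\sum_{n\le N}\psi(n)(\log n)^{2\varepsilon}$, and the extra divergence hypothesis (\ref{divergence}) is not strong enough to absorb a factor $(\log N)^{2\varepsilon}$ into $\bigl(\sum_{n\le N}\psi(n)\bigr)^2$. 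The paper's resolution is to take $D=(\log n)^{\varepsilon/4}$, so the exceptional factor is only $(\log N)^{\varepsilon/2}$, and then to run a dyadic-block argument on (\ref{divergence}) showing that $\sum_{n\le N}\psi(n)\ge(\log N)^{\varepsilon/2}$ for infinitely many $N$; the overlap estimate then holds along this subsequence of $N$, which suffices because Lemma~\ref{limsup} involves only a $\limsup$. Your proposal misses both points: the scaling of $D$ and, more fundamentally, the mechanism by which the extra divergence is exploited (it is not a direct term-by-term cancellation of $(\log n)^{\varepsilon}$, but a lower bound on the growth of the partial sums along a sparse sequence of $N$). Your aim for "an absolute implied constant" uniform in $N$ is in fact stronger than needed and not what the method delivers.
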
	
\begin{remark}
	This result also follows from \cite{chris}, where it is shown that (\ref{weakest_divergence}) implies $\lambda(W'(\psi))=1$. Since $W'(\psi)\subset W(\psi)$, we have $\lambda(W(\psi))=1$. We use, however, a completely different method of proof.
\end{remark}

\begin{thm}
	\label{app}
	Let $\ve>0$ be given. Assume that  $\psi: \mathbb{N}\to[0,1/2]$ is any function such that (\ref{sum_of_psi}) diverges and either $\psi(m)=0$ or $\psi(n)=0$, whenever two positive integers $m<n$ satisfy $(m,n)\ge n/(\log n)^{\ve}$. Then $\lambda(W(\psi))=1$. 
\end{thm}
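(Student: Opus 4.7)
The plan is to adopt the same second-moment / Gallagher zero-one law approach that underlies Theorem \ref{khintchine-divergence}, applied to the reduced sets $\mathcal{E}_n^D$ with threshold $D = D(n) := (\log n)^{\varepsilon}$. By Corollary \ref{corollary}, $\lambda(\mathcal{E}_n^D) \geq \varepsilon \psi(n)/5$ for all sufficiently large $n$, so the hypothesis $\sum_n \psi(n) = \infty$ yields $\sum_n \lambda(\mathcal{E}_n^D) = \infty$. The structural hypothesis on the support of $\psi$ will take over the role that ``extra divergence'' plays in Theorem \ref{khintchine-divergence}.

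The crux is the pairwise overlap estimate
\[
\lambda\bigl(\mathcal{E}_m^D \cap \mathcal{E}_n^D\bigr) \leq 8\, \psi(m)\, \psi(n) \qquad \text{whenever } m < n \text{ and } \psi(m)\psi(n) > 0.
\]
Expand the intersection as a sum over pairs $(a,b)$ with $a$ in the set $S$ from (\ref{set S}) associated to $n$ and $b$ in the analogous set for $m$, and set $k := am - bn$, $d := (m,n)$. A pair contributes only when $d \mid k$ and $|k| \leq n\psi(m) + m\psi(n)$. For $k = 0$, the equality $a/n = b/m$ forces $(n/d) \mid a$, and hence $(a,n) \geq n/d$; but the hypothesis of the theorem yields $d < n/(\log n)^{\varepsilon} = n/D$, so $(a,n) > D$, contradicting the condition $(a,n) \leq D$ for $a$ in the reduced set. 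Thus the ``coincident centres'' case is empty. For $k \neq 0$, a standard lattice-point count gives at most $2(n\psi(m) + m\psi(n))$ admissible pairs, each intersection having measure at most $2\min(\psi(n)/n, \psi(m)/m)$; the elementary inequality $(\psi(m)/m + \psi(n)/n)\min(\psi(m)/m, \psi(n)/n) \leq 2\psi(m)\psi(n)/(mn)$ then delivers the claimed bound.

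Pairs with $\psi(m)\psi(n) = 0$ contribute nothing, so summing gives
\[
\sum_{1 \leq m,n \leq N} \lambda\bigl(\mathcal{E}_m^D \cap \mathcal{E}_n^D\bigr) \leq \sum_{n \leq N} \lambda\bigl(\mathcal{E}_n^D\bigr) + 16 \Bigl(\sum_{n \leq N} \psi(n)\Bigr)^{2} \ll \Bigl(\sum_{n \leq N} \lambda\bigl(\mathcal{E}_n^D\bigr)\Bigr)^{2},
\]
where the final bound uses Corollary \ref{corollary} together with $\sum_n \psi(n) \to \infty$. Lemma \ref{limsup} then yields $\lambda(\limsup_n \mathcal{E}_n^D) > 0$, Gallagher's zero-one law upgrades this to full measure, and the inclusion $\mathcal{E}_n^D \subseteq \mathcal{E}_n$ forces $\lambda(W(\psi)) = 1$. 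The main obstacle is the $k = 0$ elimination step: verifying the quantitative compatibility between the definition of $S$ (via $D = (\log n)^{\varepsilon}$) and the hypothesis $(m,n) < n/(\log n)^{\varepsilon}$ is precisely where the reduced-set construction of the previous section is exactly tailored to the sparseness condition appearing in this theorem.
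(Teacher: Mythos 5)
Your argument is essentially the paper's: work with the reduced sets $\mathcal{E}_n^D$ for $D=(\log n)^{\ve}$, observe that the hypothesis on the support of $\psi$ kills the ``coincident centres'' contribution $B_2$, invoke the $B_1\le 8\psi(m)\psi(n)$ bound, and finish via Lemma~\ref{limsup} and a zero-one law. In fact you supply more detail than the paper at the key step: the paper simply asserts $B_2=0$, whereas you spell out why --- if $am=bn$ with $a\in S$ and $d=(m,n)$, then $(n/d)\mid a$, so $(a,n)\ge n/d$; and since $\psi(m)\psi(n)>0$ forces $d<n/D$, this gives $(a,n)>D$, contradicting $a\in S$. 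You also reprove the non-coincident-centres bound $B_1\le 8\psi(m)\psi(n)$ from scratch rather than citing \cite[p.~39]{har_b}; your lattice-point count and the elementary inequality $\bigl(\psi(m)/m+\psi(n)/n\bigr)\min\bigl(\psi(m)/m,\psi(n)/n\bigr)\le 2\psi(m)\psi(n)/(mn)$ are both correct.

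Two small points of bookkeeping. First, the zero-one law should be applied to $W(\psi)$, not to $\limsup_n\mathcal{E}_n^D$: Lemma~\ref{limsup} gives $\lambda(\limsup_n\mathcal{E}_n^D)>0$, the inclusion $\mathcal{E}_n^D\subseteq\mathcal{E}_n$ then gives $\lambda(W(\psi))>0$, and only \emph{then} does the zero-one law upgrade $W(\psi)$ to full measure --- the zero-one laws are stated for the specific $\limsup$ sets $W(\psi)$ or $W'(\psi)$, not for the auxiliary sets $\mathcal{E}_n^D$. Second, for $W(\psi)$ the appropriate reference is Cassels' zero-one law \cite{cassels}, which is what the paper invokes in the proof of Theorem~\ref{khintchine-divergence}; Gallagher's zero-one law \cite{gall} is for the reduced sets $W'(\psi)$.
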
	
\begin{remark}
	Theorem \ref{app} should be seen in connection with the fact that Khintchine's theorem generally fails without monotonicity. The classical counterexample of Duffin and Schaeffer uses a function $\psi$ which is supported on integers sharing many common prime factors. Theorem~\ref{app}  shows that any counterexample must be of a similar structure: as soon as we can rule out that the integers in the support of $\psi$ have a large common divisor, Khintchine's theorem also holds without monotonicity.
\end{remark}	
\section{Proof of Lemma \ref{S>=neps} and Theorems \ref{khintchine-divergence}, \ref{app}}

The following classical lemma will be crucial for the proof of the theorem. For its proof see, for example,  \cite[Lemma 2.3]{har_b}.
\begin{lemma} 
	\label{limsup}
	Let $\mathcal{A}_n, n=1, 2,\ldots,$ be events in a probability space $(\Omega, \mathcal F, \lambda)$, such that
	$$
	\sum_{n=1}^\infty\lambda(\mathcal{A}_n)=\infty.
	$$
	Then the set $\mathcal{A}$ of points in $\Omega$ belonging to infinitely many sets $\mathcal{A}_n$ satisfies
	$$
	\lambda({\mathcal{A}})\ge\limsup_{N\to\infty}\frac{\left(\sum_{n=1}^N\lambda(\calA_n)\right)^2}{\sum_{1\le m,n\le N}\lambda(\calA_m\cap\calA_n)}.
	$$
\end{lemma}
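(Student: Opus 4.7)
The statement is the classical second (divergence) Borel–Cantelli-type lemma, often attributed to Chung–Erd\H{o}s. My plan is to derive it from the Cauchy–Schwarz inequality applied to the indicator sums, and then pass to the $\limsup$ via continuity of measure.

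First I would fix integers $M \le N$ and consider the random variable
\[
S_{M,N}(\omega) = \sum_{n=M}^{N} \mathbf{1}_{\mathcal{A}_n}(\omega),
\]
which counts how many of the events $\mathcal{A}_M,\dots,\mathcal{A}_N$ occur at the point $\omega$. Writing $U_{M,N} := \bigcup_{n=M}^N \mathcal{A}_n$, the key observation is that $S_{M,N}$ vanishes off $U_{M,N}$. Thus, by the Cauchy–Schwarz inequality,
\[
\Bigl(\int S_{M,N}\,\dd\lambda\Bigr)^2 \;=\; \Bigl(\int S_{M,N}\,\mathbf{1}_{U_{M,N}}\,\dd\lambda\Bigr)^2 \;\le\; \lambda(U_{M,N})\int S_{M,N}^{2}\,\dd\lambda.
\]
Since $\int S_{M,N}\,\dd\lambda = \sum_{n=M}^{N}\lambda(\mathcal{A}_n)$ and $\int S_{M,N}^{2}\,\dd\lambda = \sum_{M\le m,n\le N}\lambda(\mathcal{A}_m\cap\mathcal{A}_n)$, rearranging yields the Paley–Zygmund-type lower bound
\[
\lambda(U_{M,N}) \;\ge\; \frac{\bigl(\sum_{n=M}^{N}\lambda(\mathcal{A}_n)\bigr)^{2}}{\sum_{M\le m,n\le N}\lambda(\mathcal{A}_m\cap\mathcal{A}_n)}.
\]

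Next I would let $N \to \infty$ for each fixed $M$. By upward continuity of measure, $\lambda(U_{M,N}) \to \lambda(U_M)$, where $U_M := \bigcup_{n\ge M}\mathcal{A}_n$. To compare the right-hand side with the ratio in the statement, I would use that the sum in \eqref{sum_of_psi}-style notation $\Sigma_N := \sum_{n=1}^{N}\lambda(\mathcal{A}_n)$ tends to infinity by hypothesis, so for fixed $M$,
\[
\sum_{n=M}^{N}\lambda(\mathcal{A}_n) = \Sigma_N - \bO_M(1) = \Sigma_N\bigl(1-\lo(1)\bigr)\qquad (N\to\infty),
\]
while the double sum over $M\le m,n\le N$ is bounded above by the double sum over $1\le m,n\le N$. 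Hence the $\limsup$ of the right-hand side (as $N\to\infty$) is at least the quantity
\[
L \;:=\; \limsup_{N\to\infty}\frac{\bigl(\sum_{n=1}^{N}\lambda(\mathcal{A}_n)\bigr)^{2}}{\sum_{1\le m,n\le N}\lambda(\mathcal{A}_m\cap\mathcal{A}_n)},
\]
giving $\lambda(U_M)\ge L$ for every $M\ge 1$.

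Finally I would let $M\to\infty$. The set $\mathcal{A}$ of points belonging to infinitely many $\mathcal{A}_n$ is exactly $\bigcap_{M\ge 1} U_M$, and the sequence $(U_M)$ is decreasing with $\lambda(U_1)\le 1<\infty$; downward continuity of measure then gives $\lambda(\mathcal{A}) = \lim_{M\to\infty}\lambda(U_M)\ge L$, which is the desired inequality. The only step requiring real care is the second one, namely checking that truncating the first $M-1$ terms does not affect the $\limsup$ — this is where the divergence hypothesis $\sum\lambda(\mathcal{A}_n)=\infty$ is used; the rest is Cauchy–Schwarz and standard continuity of measure.
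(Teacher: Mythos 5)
Your proof is correct and complete: the Cauchy--Schwarz (Chung--Erd\H{o}s/Paley--Zygmund) bound for the tail unions $U_{M,N}$, followed by upward continuity in $N$ and downward continuity in $M$, is exactly the standard argument, and your care in checking that discarding the first $M-1$ terms does not change the $\limsup$ (using the divergence hypothesis) is the one point that genuinely needs it. The paper does not prove this lemma itself but cites Harman's book (Lemma 2.3 there), whose proof is essentially the same as yours.
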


{\bf Proof of Lemma \ref{S>=neps}.}
 We start with the observation that if $d|n$, the number of $a\in\{1,\dots, n\}$ such that $(a,n)=d$ is exactly $\varphi(n/d)$. Thus,
\begin{align}
\left|S\right|=\sum_{d|n,\  d\le D}\varphi\left(\frac{n}{d}\right)\ge\sum_{d|n,\  d\le D}\frac{\varphi(n)}{d}=\varphi(n)\sum_{d|n,\  d\le D}\frac{1}{d}.
\label{expr S}
\end{align}
Let us denote by $\mathcal{P}$ the set of prime divisors of $n$, which are $\le D$ and let $\mathcal{E}=\{p\le D\}\setminus\mathcal{P}$. Then the last sum in (\ref{expr S}) satisfies the inequality
\begin{align*}
\sum_{d|n,\  d\le D}\frac{1}{d}\:\prod_p\left(1+\frac{1}{p^2}+\frac{1}{p^3}+\cdots\right)\ge\sum_{d\le D\atop p|d \Longrightarrow p\in\mathcal{P}}\frac{1}{d}.
\end{align*}
The product above can be bounded as follows.
\begin{align*}
\prod_p\left(1+\frac{1}{p(p-1)}\right)&=\frac{\zeta(2)\zeta(3)}{\zeta(6)}<2.
\end{align*}
Thus,
\begin{align}
\sum_{d|n,\  d\le D}\frac{1}{d}\ge\frac{1}{2}\sum_{d\le D\atop p|d \Longrightarrow p\in\mathcal{P}}\frac{1}{d}.\label{comp}
\end{align}
We make use of the comparison of the sums in (\ref{comp}), since the sum on the right-hand side can be estimated by the sieve with logarithmic weights. To get a lower bound we follow the lines of the proof of Lemma 2.1 in Granville et al. \cite{gran}. Using their notation, we get
\begin{align*}
\sum_{d\le D\atop p|d \Longrightarrow p\in\mathcal{P}}\frac{1}{d}&=\prod_{p\in\mathcal{E}}\left(1-\frac{1}{p}\right)\prod_{p\in\mathcal{E}}\left(1-\frac{1}{p}\right)^{-1}\sum_{d\le D\atop p|d \Longrightarrow p\in\mathcal{P}}\frac{1}{d}\\
&\ge\prod_{p\in\mathcal{E}}\left(1-\frac{1}{p}\right)\sum_{l\le D\atop p|l \Longrightarrow p\in\mathcal{E}}\frac{1}{l}\sum_{d\le D\atop p|d \Longrightarrow p\in\mathcal{P}}\frac{1}{d}\\
&\ge\prod_{p\in\mathcal{E}}\left(1-\frac{1}{p}\right)\sum_{m\le D}\frac{1}{m}\ge\prod_{p\in\mathcal{E}}\left(1-\frac{1}{p}\right) \log D
\end{align*}
for all $D\ge 1$, with the use of the fact that every integer $m\le D$ may be written as $ld$ in the second to last inequality.
In view of the above, (\ref{expr S}) can be written as
\begin{align}
\left|S\right|\ge \frac{n}{2}\log D\prod_{p\le D}\left(1-\frac{1}{p}\right)\prod_{p|n,\  p>D}\left(1-\frac{1}{p}\right).\label{multiplication}
\end{align}
To evaluate the products appearing in (\ref{multiplication}) we make use of Mertens' formula (see  \cite[p. 17]{ten}). We get
\begin{align*}
\prod_{p\le D}\left(1-\frac{1}{p}\right)=\frac{\e^{-\gamma}+\lo(1)}{\log D}\ge \frac{1}{2\log D},
\end{align*}
for all sufficiently large $D$. We evaluate the second product in (\ref{multiplication}) by splitting it into two parts as follows:
\begin{align*}
\prod_{p|n\atop D<p\le\log n}\left(1-\frac{1}{p}\right)&\ge\frac{\e^{-\gamma}+\lo(1)}{\log\log n}\cdot\frac{\ve\log\log n}{\e^{-\gamma}+\lo(1)}\ge 0.8\ \ve,\\
\prod_{p|n\atop p>\log n}\left(1-\frac{1}{p}\right)&\ge\left(1-\frac{1}{\log n}\right)^{\frac{\log n}{\log\log n}}\ge\exp\left(\frac{\log n}{\log\log n}\cdot\frac{-1}{\log n -1}\right)\ge\frac{1}{2},
\end{align*}
for every sufficiently large $n$, with a use of the estimate $\log(1+x)\ge x/(1+x)$, valid for every $x>-1$, in the second to last inequality.
Inserting the last three calculated bounds in (\ref{multiplication}), for sufficiently large $n$ we obtain the desired inequality.

{\bf Proof of Theorem \ref{khintchine-divergence}.}
 Let $\calE_n:=\mathcal{E}_n^D$, where $D=(\log n)^{\ve/4}$. We prove that the set of $x\in\mathbb{R} / \mathbb{Z}$, which are contained in infinitely many sets $\calE_n$, has positive measure. Then, since $\calE_n\subset\mathcal{E}_n$ we get that $\lambda(W(\psi))>0$. In accordance to Cassels' zero-one law \cite{cassels} it means that $\lambda(W(\psi))=1$, as required.
 
 In view of Lemma~\ref{limsup} it is enough to prove that 
 \begin{align}
 \sum_{1\le m, n\le N}\lambda\left(\calE_m\cap\calE_n\right)\ll\left(\sum_{n\le N}\lambda(\calE_n)\right)^2 \label{overlapO}
 \end{align}
for infinitely many $N$, since we only require "$\limsup$". Assume $n>m\ge 1$ are fixed. Let
$\lambda(\calE_m\cap \calE_n)=B_1+B_2$, where $B_1, B_2$ are the contributions to the intersection from intervals whose centres do not coincide, and the contributions to the intersection from intervals whose centres do coincide (that is, $r/m=s/n$ for some integers $r$ and $s$), respectively. As shown in \cite[p. 39]{har_b}, we have an elementary bound $B_1\le 8\psi(n)\psi(m)$. To investigate $B_2$, suppose we have two overlapping intervals
$$
\left( \frac{r - \psi(m)}{m},\frac{r+\psi(m)}{m} \right)\bigcap \left( \frac{s - \psi(n)}{n},\frac{s+\psi(n)}{n} \right)\neq\emptyset
$$
with $rn=sm$, $(r,m)\le (\log m)^{\ve/4}, (s,n)\le (\log n)^{\ve/4}$. A similar situation is already discussed in \cite[p. 176]{har_b}, but instead of $(\log m)^{\ve/4}, (\log n)^{\ve/4}$ they bound the greatest common divisors by a specific constant. Nonetheless, the following arguments are an adaptation of their proof to our case. We have
$$
B_2\le 2\frac{\psi(n)}{n}\mathop{\sum_{r=1}^{m}\sum_{s=1}^{n}}_{\substack{rn=sm\\ (r,m)\le(\log m)^{\ve/4}\\(s,n)\le(\log n)^{\ve/4}}}1.
$$
It is clear that the summation is empty unless our fixed $m,n$ satisfy $(m,n)\ge n/(\log n)^{\ve/4}$. When solutions to $rn=ms$ exist, there are no more than $m$ of them. Hence for any $N$
\begin{align*}
\sum_{m=1}^N\sum_{m<n\le N}B_2&\le 2\sum_{n=1}^N\frac{\psi(n)}{n}\sum_{\substack{n>m\ge n/(\log n)^{\ve/4}\\ (n,m)\ge n/(\log n)^{\ve/4}}}m\notag\\
&\le 2\sum_{n=1}^N\frac{\psi(n)}{n}\sum_{\substack{d|n\\ d\le(\log n)^{\ve/4}}}n\,d\notag\\
&\le 2\sum_{n=1}^N\frac{\psi(n)}{n}\cdot n(\log n)^{\ve/2}.\notag
\end{align*}
Combining all the above overlap estimates and adding the summand for $m=n$ we obtain
\begin{align}
\label{sumestimate}
\sum_{m=1}^N\sum_{n=1}^N\lambda\left(\calE_m\cap\calE_n\right)&\le 8\sum_{m=1}^N\sum_{n=1}^N\psi(m)\psi(n)+4\sum_{n=1}^N\psi(n)(\log n)^{\ve/2}+\sum_{n=1}^N \lambda(\calE_n)\notag\\
&\le\left(\frac{8\cdot 20^2}{\ve^2}+1\right) \left(\sum_{n=1}^N\lambda(\calE_n)\right)^2+4\sum_{n=1}^N\psi(n)(\log n)^{\ve/2},
\end{align}
with a use of Corollary~\ref{corollary} in the first summand of (\ref{sumestimate}). For the second summand we apply partial summation to get
\begin{align*}
\sum_{n=1}^N\psi(n)(\log n)^{\ve/2}&=\sum_{n=1}^N\psi(n) (\log N)^{\ve/2}-\psi(1)(\log 2)^{\ve/2}\\
&-\int_{2}^{N}\frac{0.5\,\ve\ \sum_{m\le t}\psi(m)}{t(\log t)^{1-\ve/2}}\ \dd t
\le \sum_{n=1}^N\psi(n) (\log N)^{\ve/2}.
\end{align*}
We are left to prove that $\sum_{n=1}^N\psi(n) (\log N)^{\ve/2}\ll\left(\sum_{n\le N}\lambda(\calE_n)\right)^2$ for infinitely many $N$, which together with (\ref{sumestimate}) will establish the theorem. To show this, we divide the integers $n>4$ into blocks 
$$
2^{2^{k}}<n\le 2^{2^{k+1}}, \qquad k\ge 1.
$$
Then the extra divergence condition (\ref{divergence}) implies that 
$$
\sum_{n=2^{2^{k}}+1}^{2^{2^{k+1}}}\frac{\psi(n)}{(\log n)^\ve}\ge \frac{1}{k^2}
$$
for infinitely many $k$. Thus for such $k$
$$
\sum_{n=1}^{2^{2^{k+1}}}\psi(n)\ge\sum_{n=2^{2^{k}}+1}^{2^{2^{k+1}}}\psi(n)\ge 2^{k \ve}(\log 2)^\ve\frac{1}{k^2},
$$
since in this range of summation $2^{k \ve}(\log 2)^\ve\le(\log n)^\ve\le 2^{(k+1)\ve}(\log 2)^\ve$. Letting $2^{2^{k+1}}=:N$ in the above inequality, we deduce that for infinitely many sufficiently large $N$
$$
\sum_{n=1}^N\psi(n)\ge \left(\frac{\log N}{2}\right)^\ve\cdot\left(\frac{\log 2}{\log\log N-\log\log 2-\log 2}\right)^2\ge\left(\log N\right)^{\ve/2}.
$$

Hence for infinitely many sufficiently large $N$
\begin{align*}
\sum_{n=1}^N\psi(n) (\log N)^{\ve/2}\le\left(\sum_{n=1}^N\psi(n)\right)^2\le\left(\frac{20}{\ve}\right)^2\left(\sum_{n\le N}\lambda(\calE_n)\right)^2,
\end{align*}
which, together with (\ref{sumestimate}), implies that (\ref{overlapO}) holds. This  finishes the proof.

{\bf Proof of Theorem \ref{app}.}
We follow the first part of the proof of Theorem \ref{khintchine-divergence}. Set $D=(\log n)^\ve$ and consider the sets $\mathcal{E}_n^D$. By our assumption $B_2=0$ and we thus easily get (\ref{overlapO}) with $\calE_n$ replaced by $\mathcal{E}_n^D$. From this the result follows, as noted in the second paragraph of the proof of Theorem \ref{khintchine-divergence}.

\section*{Acknowledgments}
This work was supported by the Austrian Science Fund project F5510-N26. 
This paper was written while I was visiting the number theory group of TU Graz. I would like to thank for a very warm welcoming, good company and the opportunity to be a part of academic life there. Special thanks to Christoph Aistleitner for suggesting the problem and for the helpful discussions.


\end{document}